\documentclass[11pt]{article}

\usepackage{amsmath,amssymb,amsthm}
\usepackage{graphicx}
\usepackage{geometry}
\usepackage{setspace}
\usepackage{placeins}
\numberwithin{equation}{section}
\usepackage{cite}
\usepackage{tabularx}
\usepackage{float}
\usepackage{comment}
\usepackage{booktabs} 
\newtheorem{theorem}{Theorem}[section]
\begin{document}

\title{\textbf{Analysis of a Stochastic Energy Supply and Demand Model with Renewable Integration}}

\author{S.~O.~Edeki$^1$, S.~Noeiaghdam$^{2,3}$, L. Hairong$^4$, S. Kaennakham$^5$, I. D. Ezekiel$^{6}$ }
  \date{}
 \maketitle
 
\begin{center}
\scriptsize{ $^1$Department of Mathematics, Dennis Osadebay University, Asaba, 320242, Nigeria. soedeki247@gmail.com\\
 $^2$Institute of Mathematics, Henan Academy of Sciences, Zhengzhou 450046, China. snoei@hnas.ac.cn\\
 $^3$Department of Mathematical Sciences, Saveetha School of Engineering, SIMATS, Chennai, 602105, India.\\
 $^4$School of Science, China University of Geosciences, Beijing, PR China. lianhr@cugb.edu.cn \\
 $^5$School of Mathematics and Geoinformatics, Institute of Science, Suranaree University of Technology
Nakhon Ratchasima, THAILAND. sayan$_{-}$kk@g.sut.ac.th  \\
$^6$ Department of Mathematics and Statistics, Federal Polytechnic, Ilaro, Nigeria. imekela.ezekiel@federalpolyilaro.edu.ng  \\

}
\end{center}

\begin{abstract}
In this work, a stochastic energy supply–demand model with renewable integration is developed and analyzed. The basic nonlinear deterministic model describing the relationship among regional demand, external supply, energy imports, and renewable resource integration is extended to an Itô-type stochastic system that captures the uncertainties due to market volatility, climatic variation, policy interventions, and technical changes. Also, the noise structure is multiplicative, ensuring proportional fluctuations and preservation of nonnegativity of the state variables. Global existence, uniqueness of positive solutions, moment boundedness, and stochastic persistence are established rigorously. Furthermore, the deterministic system is analyzed, and stochastic stability is examined using matrix inequality criteria to guarantee almost sure exponential stability of the system in the stochastic setting. Among other results, stochastic perturbations significantly alter the effective system capacity compared to the deterministic case; however, under suitable parameter conditions, boundedness and stability cases are preserved. The Euler–Maruyama scheme is employed to perform numerical simulations to illustrate various dynamical behaviors and highlight the effects of uncertainty on system dynamics.The numerical reliability of the proposed model is further confirmed by additional numerical experiments via the Milstein scheme and parameter sensitivity analysis. Moreover, the results indicate that stochastic effects should be considered for capturing complex energy systems' behavior under uncertainty and its implications for renewable integration.
\end{abstract}

\noindent\textbf{Keywords:} Stochastic differential inclusion; Energy supply–demand dynamics; Renewable energy integration; Stability analysis.  
\\

\noindent\textbf{MSC 2020:} 60H10; 34F05; 37N40; 91B74.

\section{Introduction}

Energy systems are inherently complex and are influenced by interactions among economic, environmental, and technical factors from economic to environmental and technical issues. Changes in market prices, policy interventions, supply chain disruptions, and climatic variations create traces of uncertainty on energy demands, external supplies, etc. [1-3]. Globally, ongoing energy transitions and increased penetration of renewables have brought into focus an important consequence that can be studied more under environmental policy dynamics, as uncertainty is inherently irreducible [4, 5].

Classical models of energy that assume determinism constitute well-structured models that describe equilibrium behavior, growth mechanisms, and conditionally stable points. These models usually take the form of a system of nonlinear differential equations to depict interactions between changes in demand, responses on the supply side, and adoption of renewables [6-8]. Deterministic models, however, are merely mathematically tractable solutions for qualitative analysis and assume perfect predictability and time-independent parameters. Nevertheless, in reality, energy markets are subject to stochastic shocks and continue to display periods of persistent volatility that cannot be fully captured by a deterministic dynamic structure alone [9, 10].

In mathematical terms, stochastic differential equations provide a natural environment for including uncertainty in dynamic systems. In economics and finance, stochastic modeling has been employed to analyze volatility, long term persistence, and stability under random perturbations. For example, stochastic modeling can be beneficial in studying noise-induced transitions, stabilization effects, and eventually changes in equilibrium behavior in power systems [11-13]. Several studies have looked into introducing stochasticity to energy price models and renewable generation forecasting, whereas very few studies provide a rigorous mathematical analysis of coupled energy supply–demand systems with renewable integration under stochastic perturbations [14, 15].

A notable gap in the literature concerns whether the combined nonlinear energy demand dynamics, streamlined external competitive supply interactions, and the evolution of renewable resources and multiplicative environmental noise can be captured within one unified stochastic framework [16-18]. Further, the stability analysis of such systems is generally heuristic or numerical, without rigorous quotations on global existence, positivity preservation, moment boundedness, stochastic persistence, and almost sure exponential stability [19, 20].

The present study addresses this gap by formulating and analyzing to formulate and analyze a stochastic energy supply and demand model for it accomodates renewable energy resources and multiplicative noise in a way [21, 22]. The report starts with a deterministic nonlinear model, wherein the linkages among regional demands, external supply, imports, and renewable resources are considered. Later, it is transformed into a Markov stochastic model to include any uncertainty brought about by market volatility, climatic variabilities, and policy fluctuations [23-27].

Global existence and uniqueness for positive solutions and moment boundedness together with stability properties are proved for the system under system-type matrix inequalities. Stochastic persistence results are also proven to ensure the long-time survival of the systems. Numerical examples illustrate and validate our theoretical results in the framework of the Euler-Maruyama method [28-30].

By coupling deterministic equilibrium analyses with stochastic stability discussions, this study contributes to the analytical development of energy systems under uncertainty and lays out a structured strategy to look at renewable integration in volatile situations [31-33].

\section{Energy Supply–Demand Modeling Framework}

In this part, we consider a nonlinear deterministic model describing the interaction among electricity demand, external supply, imports, and renewable generation, import, and renewable generation options. [34-36]. Next, we will go from a deterministic model to a stochastic one that captures the uncertainty arising from economic and environmental conditions.

 \subsection{Deterministic Energy Supply–Demand Model [34]}
Let $X_1(t)$ denote energy demand in region A, $X_2(t)$ external supply from region B, 
$X_3(t)$ imported energy, and $X_4(t)$ renewable energy resources available in region A. 
The deterministic dynamics are governed by the nonlinear system
\begin{equation}\label{eq:deterministic_system}
\begin{cases}
\displaystyle \frac{dX_1}{dt} 
= a_1 X_1\left(1-\frac{X_1}{W}\right) 
- a_2 X_2 (X_2 + X_3) 
- d_3 X_4, \\[3mm]

\displaystyle \frac{dX_2}{dt} 
= - z_1 X_2 
- z_2 X_3 
+ z_3 X_1 \big(N - (X_1 - X_3)\big), \\[3mm]

\displaystyle \frac{dX_3}{dt} 
= s_1 X_3 (s_2 X_1 - s_3), \\[3mm]

\displaystyle \frac{dX_4}{dt} 
= d_1 X_1 - d_2 X_4,
\end{cases}
\end{equation}
 where all parameters are positive constants satisfying $N < W$. Here, $N>0$ denotes the effective market interaction capacity parameter governing the demand-driven response of external supply. It represents the upper bound on the net demand level that can stimulate supply expansion. The condition $N < W$ ensures that supply responsiveness saturates before the demand reaches its intrinsic carrying capacity $W$, thereby maintaining economic consistency of the model. Fig. 1 shows the graphical form of the model (\ref{eq:deterministic_system})

\begin{figure}[h] 
\centering
\includegraphics[width=4in]{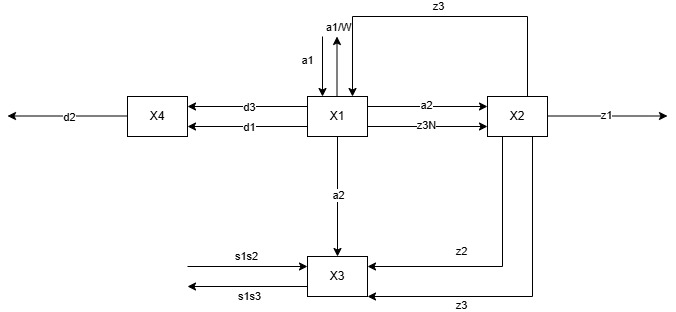}\\
\caption{Graphical representation of the supply demand model}\label{f18}
\end{figure}

Energy demand increases at a logistic rate with the market's carrying capacity designated by $W$. Imported energy competes with the energy supplied on demand, when demand exceeds a certain threshold determined by system parameters, renewable energy sources contribute to meeting demand while gradually depreciating over time, determined by the system configuration, then the renewable-energy sources-such as hydro-wind-will meet the demand, but only to lessen over its lifetime.

\subsection{Stochastic Extension of the Deterministic Model}

Real-world energy systems are subject to disturbances such as such as market price shocks, changes in weather affecting renewable generation, sensible policies, or technological changes [37,38].

To this end, the deterministic model (2.1) has to be substituted by a stochastic differential equation system given that the only way for stochastic elements to enter into the model was through uncertainty therein.

Let $B_i(t)$, $i = 1,2,3,4$, be independent standard Brownian motions defined on a complete filtered probability space 
$\big(\Omega, \mathcal{F}, \{\mathcal{F}_t\}_{t \ge 0}, \mathbb{P}\big)$ satisfying the usual conditions. 
The stochastic model is given by

\begin{equation}\tag{2.2}\label{eq:stochastic_model}
\begin{cases}
\displaystyle 
dX_1 = \left[ a_1 X_1\left(1-\frac{X_1}{W}\right) 
- a_2 X_2 (X_2 + X_3) 
- d_3 X_4 \right] dt 
+ \sigma_1 X_1 \, dB_1(t), \\[3mm]

\displaystyle 
dX_2 = \left[ - z_1 X_2 
- z_2 X_3 
+ z_3 X_1 \big(N - (X_1 - X_3)\big) \right] dt 
+ \sigma_2 X_2 \, dB_2(t), \\[3mm]

\displaystyle 
dX_3 = \left[ s_1 X_3 (s_2 X_1 - s_3) \right] dt 
+ \sigma_3 X_3 \, dB_3(t), \\[3mm]

\displaystyle 
dX_4 = \left[ d_1 X_1 - d_2 X_4 \right] dt 
+ \sigma_4 X_4 \, dB_4(t),
\end{cases}
\end{equation}

\noindent where $\sigma_i > 0$ denote the intensities of stochastic perturbations.
The proportional dependence of the noise terms on the state variables reflects realistic economic and environmental variability, thereby justifying the figure for economic and environmental variability. 
When $\sigma_i = 0$ for all $i$, system (2.2) reduces to the deterministic model (2.1), showing that the stochastic formulation is a natural extension of the deterministic framework.

\subsection{Basic Model Assumptions}

The modeling framework proposed is based on the following core assumptions.

All state variables represent physical quantities and are naturally assumed to be nonnegative for all time.

All system parameters are therefore positive constants, corresponding to intrinsic growth rates, interaction strengths, and decline effects.

The demand growth rate is therefore assumed to follow a logistic equation to take saturation effects into account for energy consumption.

External supply and energy imports appear to interact competitively and respond to demand also through nonlinear coupling terms.

Renewable resources would grow along with demand and would suffer linear depreciation.

Uncertainty introduces via the stochastics formulation correspond to multiplicative terms through independent Brownian motions.

This proposes that relative gain/variance scales with the value of the corresponding state variable, which has found considerable support from empirical studies in economic and renewable energy systems.

To sum things up, in the following running examples, the Brownian motions developed will be of the general type satisfying the usual properties such as the pathwise characterization, Markov proprieties, and the optional-stopping theorem.

\section{Equilibria of the Deterministic Model and the Stochastic Model-Qualitative Analysis}

This section considers the deterministic equilibrium classification and Jacobian analysis with the qualitative theory of the stochastic model-system. We first characterize equilibria of the deterministic model and describe local stability via the Jacobian spectrum. We then establish well-posedness and key qualitative properties of the stochastic extension, including global existence, positivity, moment boundedness, almost sure exponential stability under a matrix inequality condition, and a persistence result ensuring long-run survival in a time-averaged sense.

\subsection{Equilibrium Points of the Deterministic System}

Consider the deterministic system \eqref{eq:deterministic_system}. Equilibrium points 
$E=(X_1^*,X_2^*,X_3^*,X_4^*)$ satisfy
\begin{equation}\label{eq:equil_cond}
\frac{dX_i}{dt}=0,\qquad i=1,2,3,4.
\end{equation}
From the fourth equation of \eqref{eq:deterministic_system},
\begin{equation}\label{eq:X4star}
d_1X_1^*-d_2X_4^*=0 \quad \Rightarrow \quad X_4^*=\frac{d_1}{d_2}X_1^* .
\end{equation}
From the third equation of \eqref{eq:deterministic_system},
\begin{equation}\label{eq:X3star_or_X1star}
s_1X_3^*(s_2X_1^*-s_3)=0,
\end{equation}
which implies either $X_3^*=0$ or $X_1^*=s_3/s_2$.

One equilibrium is the trivial equilibrium
\begin{equation}\label{eq:E0}
E_0=(0,0,0,0).
\end{equation}

If $X_3^*=0$, then \eqref{eq:X4star} gives $X_4^*=(d_1/d_2)X_1^*$, and the remaining equilibrium conditions become
\begin{equation}\label{eq:reduced_equil_caseX3}
a_1X_1^*\left(1-\frac{X_1^*}{W}\right)-a_2(X_2^*)^2-d_3X_4^*=0,
\qquad
-z_1X_2^*+z_3X_1^*(N-X_1^*)=0.
\end{equation}
Substituting \eqref{eq:X4star} into \eqref{eq:reduced_equil_caseX3} yields
\begin{equation}\label{eq:equil_Estar_caseX3}
a_1X_1^*\left(1-\frac{X_1^*}{W}\right)-a_2(X_2^*)^2-d_3\frac{d_1}{d_2}X_1^*=0,
\qquad
X_2^*=\frac{z_3}{z_1}X_1^*(N-X_1^*).
\end{equation}
This defines a renewable-integrated equilibrium of the form 
$E^*=(X_1^*,X_2^*,0,X_4^*)$ provided the parameters ensure $X_1^*>0$, $X_2^*>0$, and $X_4^*>0$. 
Positivity of $X_2^*$ requires $0<X_1^*<N$.

If $X_1^*=s_3/s_2$, then
\begin{equation}\label{eq:X4star_caseX1}
X_4^*=\frac{d_1}{d_2}\frac{s_3}{s_2}.
\end{equation}
In this case, the remaining equilibrium conditions reduce to algebraic relations determining $X_2^*$ and $X_3^*$. A necessary condition for feasibility is
\begin{equation}\label{eq:N_condition}
N>\frac{s_3}{s_2},
\end{equation}
ensuring that the demand level implied by the import threshold is consistent with the constraint $N<W$. 

\subsection{Jacobian Matrix and Local Stability}

Let $F=(F_1,F_2,F_3,F_4)$ denote the drift field of the deterministic system \eqref{eq:deterministic_system}. The Jacobian matrix
$J(X)=DF(X)$ is
\begin{equation}\label{eq:Jacobian}
J(X)=
\begin{pmatrix}
a_1\left(1-\frac{2X_1}{W}\right) & -a_2(2X_2+X_3) & -a_2X_2 & -d_3 \\
z_3(N-2X_1+X_3) & -z_1 & -z_2+z_3X_1 & 0 \\
s_1s_2X_3 & 0 & s_1(s_2X_1-s_3) & 0 \\
d_1 & 0 & 0 & -d_2
\end{pmatrix}.
\end{equation}
Local stability of an equilibrium $E$ is determined by the eigenvalues of $J(E)$. At the trivial equilibrium $E_0$,
\begin{equation}\label{eq:J_E0}
J(E_0)=
\begin{pmatrix}
a_1 & 0 & 0 & -d_3 \\
z_3N & -z_1 & -z_2 & 0 \\
0 & 0 & -s_1s_3 & 0 \\
d_1 & 0 & 0 & -d_2
\end{pmatrix},
\end{equation}
and the eigenvalues include
\begin{equation}\label{eq:eigs_E0}
\lambda_1=a_1,\qquad \lambda_2=-z_1,\qquad \lambda_3=-s_1s_3,\qquad \lambda_4=-d_2.
\end{equation}
Hence $E_0$ is unstable whenever $a_1>0$, which holds under the standing parameter assumptions. Stability of nontrivial equilibria is assessed similarly by evaluating $J(E)$ and applying spectral conditions or Routh--Hurwitz type criteria where appropriate.

\subsection{Global Existence and Uniqueness of the Stochastic System}

Consider the stochastic system \eqref{eq:stochastic_model}, written componentwise as
\begin{equation}\label{eq:SDE_component}
dX_i(t)=f_i(X(t))\,dt+\sigma_iX_i(t)\,dB_i(t),\qquad i=1,2,3,4,
\end{equation}
where $B_1,\dots,B_4$ are independent standard Brownian motions.

\begin{theorem}[Global existence and uniqueness]\label{thm:G-exist_unique}
For any initial value $X(0)\in\mathbb{R}_+^4$, the stochastic system \eqref{eq:stochastic_model} admits a unique global solution $X(t)$ defined for all $t\ge 0$.
\end{theorem}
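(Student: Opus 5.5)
The plan is the standard two-stage argument: first obtain a unique local solution, then show it cannot explode. The drift $f=(f_1,\dots,f_4)$ of \eqref{eq:stochastic_model} is polynomial and the diffusion $\mathrm{diag}(\sigma_1X_1,\dots,\sigma_4X_4)$ is linear, so all coefficients are locally Lipschitz on $\mathbb{R}^4$. Hence, for any $X(0)\in\mathbb{R}_+^4$, there is a unique maximal local solution on $[0,\tau_e)$, where $\tau_e$ is the explosion time. It then remains to prove $\tau_e=\infty$ a.s. For this I will use Khasminskii-type stopping times $\tau_k=\inf\{t<\tau_e:\ |X(t)|\ge k\}$, show $\tau_\infty=\lim_k\tau_k=\infty$ a.s., and note $\tau_\infty=\tau_e$.

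Rather than look for one global Lyapunov function, I will exploit the triangular structure, since a direct quadratic $V$ produces cubic cross terms such as $z_3X_1X_2X_3$ and $s_1s_2X_1X_3^2$. By Itô's formula,
\[
X_3(t)=X_3(0)\exp\Bigl(\int_0^t\bigl(s_1(s_2X_1(s)-s_3)-\tfrac{\sigma_3^2}{2}\bigr)ds+\sigma_3B_3(t)\Bigr)\ge 0 .
\]
So $X_3$ keeps its sign and stays finite on any interval where $X_1$ is bounded. The equations for $X_4$ and for $X_2$ are linear in their own variable, with forcing terms $d_1X_1$ and $-z_2X_3+z_3X_1(N-X_1+X_3)$ respectively. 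Variation of constants, using the stochastic exponentials of $-d_2t+\sigma_4B_4$ and $-z_1t+\sigma_2B_2$, then expresses $X_4$ and $X_2$ explicitly and shows they are finite wherever $X_1,X_3$ are. Thus non-explosion of the whole system reduces to showing that $X_1$ stays bounded on compact time intervals.

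For the upper bound on $X_1$ I will use a comparison argument. Since $X_3\ge0$, we have $-a_2X_2(X_2+X_3)\le a_2X_3^2/4$. On the event that $X_4\ge0$ (which holds while $X_1\ge0$, by the variation-of-constants formula for $X_4$), $X_1$ is dominated by a stochastic logistic equation with a nonnegative additive forcing term. That equation is known to be non-explosive. The argument would be closed by applying $V(x)=x_1^2+\log(1+x_3)$ up to $\tau_k$, taking expectations, and using Gronwall's inequality to get $\mathbb{P}(\tau_k\le T)\to0$.

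The main obstacle is the \emph{lower} side of $X_1$. The drift term $-a_1X_1^2/W$ is negative and quadratic, and at $X_1=0$ the drift equals $-a_2X_2(X_2+X_3)-d_3X_4$, which is generally negative. So multiplicative noise alone does not keep $X_1$ in $\mathbb{R}_+$. Once $X_1<0$, the quadratic term can drive $X_1\to-\infty$ in finite time, exactly as for $\dot x=-x^2$. I would first try to prove that $X_1$ stays nonnegative up to $\tau_e$. Failing that, I would restrict to a parameter/initial-data regime where the negative terms are dominated near $X_1=0$, or reinterpret the model with the coupling terms in the $X_1$ equation multiplied by $X_1$ (so that the equation becomes $dX_1=X_1(\cdots)dt+\sigma_1X_1dB_1$). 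In that reinterpreted setting a standard Lyapunov function $V=\sum_i c_i(X_i-1-\log X_i)$ yields $LV\le K$ and completes the proof. I expect this positivity issue to be where the statement as written needs an extra hypothesis.
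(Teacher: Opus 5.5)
Your proposal does not prove the theorem. After reducing non-explosion to control of $X_1$, it stops at the lower side of $X_1$ and retreats to an extra hypothesis or a modified model. That retreat is forced: your suspicion is correct, and the statement is false for every choice of positive parameters and noise intensities. Take $X(0)=(\varepsilon,M,\varepsilon,\varepsilon)$, fix a large level $L$, and let $M$ be large relative to $L$. Consider first the deterministic system. For $t\le 1$, while $X_2\ge M/2$ and $X_1\ge -L$, one has $\dot X_1\le a_1W/4+d_1d_3L-a_2M^2/4$. So $X_1$ reaches $-L$ at some $t_1=O(L/M^2)$, while $X_2$ loses only $O(L/M+L^3/M^2)$ and $X_3$ and $\lvert X_4\rvert$ stay $O(\varepsilon)$. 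After $t_1$, $X_3$ is nonincreasing and $X_2(X_2+X_3)\ge -X_3^2/4$. Hence $u=-X_1$ satisfies $\dot u\ge \frac{a_1}{W}u^2-K-d_1d_3\int_{t_1}^t u\,ds$ with $u(t_1)=L$ and $K$ independent of $L$, and it blows up within time $O(W/(a_1L))$. For \eqref{eq:stochastic_model} itself, substitute $Y_i=X_i/\Phi_i$ with $\Phi_i(t)=\exp\bigl(\sigma_iB_i(t)-\tfrac12\sigma_i^2t\bigr)$. This gives the random ODE $\dot Y_i=\Phi_i^{-1}f_i(\Phi_1Y_1,\dots,\Phi_4Y_4)$, whose coefficients are bounded above and below on the positive-probability event $\{\sup_{t\le1}\max_i\lvert B_i(t)\rvert\le\delta\}$. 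The same estimates then give explosion with positive probability. So no argument can yield $\tau_e=\infty$ a.s., and your first fallback (showing $X_1\ge0$ up to $\tau_e$) cannot succeed. The paper's proof does not escape this. It uses precisely the global quadratic function $V=1+\sum_iX_i^2$ that you rejected, asserting $2X_if_i\le C(1+\lVert X\rVert^2)$ because the drift is quadratic. That bound is false for the reasons you give. The terms $2z_3X_1X_2X_3$ and $2s_1s_2X_1X_3^2$ are cubic and positive on the orthant. And $-\frac{2a_1}{W}X_1^3$ becomes anti-dissipative once $X_1<0$, which the solution does reach because the drift of $X_1$ at $X_1=0$ is $-a_2X_2(X_2+X_3)-d_3X_4<0$ for $X_2,X_3,X_4>0$. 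For the same reason the paper's positivity theorem fails too.

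Your triangular reduction ($X_3$ explicit, $X_2$ and $X_4$ by variation of constants) is sound, but two claims built on it would need repair in any corrected version. First, the upper-side comparison does not close. The forcing $a_2X_3^2/4$ is not exogenous, since $X_3$ grows like $\exp(s_1s_2\int X_1)$. Keeping only your bound, you are left with the pair $\dot x=a_1x(1-x/W)+a_2y^2/4$, $\dot y=s_1y(s_2x-s_3)$, which itself blows up in finite time. For large $y$ the ratio $x/y$ is attracted to $\kappa$ with $\kappa^2=\frac{a_2/4}{a_1/W+s_1s_2}$, and then $\dot y\approx s_1s_2\kappa y^2$. Correspondingly, with your bound the estimate for $\mathcal{L}\bigl(x_1^2+\log(1+x_3)\bigr)$ contains $\tfrac12a_2x_1x_3^2$, which is not $O(1+V)$, so the Gronwall step fails. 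The natural upper control of $X_1$ comes from $X_2\ge0$, which makes $-a_2X_2(X_2+X_3)\le0$. But, second, positivity fails for $X_2$ as well. At $X_2=0$ its drift is $-z_2X_3+z_3X_1(N-X_1+X_3)$, which is negative when $X_1$ is small and $X_3>0$, or when $X_1>N+X_3$. Hence multiplying only the coupling terms of the $X_1$ equation by $X_1$ does not give $\mathcal{L}V\le K$ for $V=\sum_ic_i(X_i-1-\log X_i)$, because $-c_2f_2/X_2$ is unbounded above as $X_2\downarrow0$. The $X_2$ equation would have to be modified too, or the sign of $X_2$ controlled some other way.
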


\begin{proof}
Define the Lyapunov function $V(X)=1+\sum_{i=1}^4X_i^2$. By It\^o's formula,
\begin{equation}\label{eq:Ito_V}
dV(X(t))=\mathcal{L}V(X(t))\,dt+dM(t),
\end{equation}
where $M(t)$ is a local martingale and the generator satisfies
\begin{equation}\label{eq:gen_V}
\mathcal{L}V(X)=2\sum_{i=1}^4X_if_i(X)+\sum_{i=1}^4\sigma_i^2X_i^2.
\end{equation}
The drift terms are polynomial of degree at most two. In particular, for the first component,
\begin{equation}\label{eq:bound_first_drift}
2X_1a_1X_1\left(1-\frac{X_1}{W}\right)
=2a_1X_1^2-\frac{2a_1}{W}X_1^3
\le C_1(1+X_1^2),
\end{equation}
since the negative cubic term improves dissipativity for large $X_1$. Similarly, all remaining terms satisfy an estimate of the form
\begin{equation}\label{eq:bound_general}
2X_if_i(X)\le C(1+\lVert X\rVert^2)
\end{equation}
for a constant $C>0$. Therefore,
\begin{equation}\label{eq:gen_bound}
\mathcal{L}V(X)\le C(1+V(X)).
\end{equation}
By the standard non-explosion criterion for SDEs and local Lipschitz continuity of the coefficients, the local solution extends globally and is unique.
\end{proof}

\subsection{Positivity of Solutions}

\begin{theorem}[Positivity]\label{thm:positivity}
If $X_i(0)>0$ for $i=1,2,3,4$, then $X_i(t)>0$ for all $t>0$ almost surely.
\end{theorem}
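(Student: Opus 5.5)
The plan is the standard stopping-time argument of Mao type, layered on top of Theorem~\ref{thm:G-exist_unique}. Since the coefficients of \eqref{eq:stochastic_model} are locally Lipschitz, for $X(0)\in(0,\infty)^4$ there is a unique local solution that stays in the open orthant up to a random time. I define
\[
\tau_k=\inf\Bigl\{t\ge 0:\ \min_i X_i(t)\le \tfrac1k \ \text{or}\ \max_i X_i(t)\ge k\Bigr\},\qquad \tau_\infty=\lim_{k\to\infty}\tau_k .
\]
Theorem~\ref{thm:G-exist_unique} already excludes explosion to $+\infty$. Hence it suffices to show $\tau_\infty=\infty$ almost surely, which means no component reaches $0$ in finite time. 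I argue by contradiction: suppose there are $T>0$ and $\varepsilon>0$ with $\mathbb P(\tau_k\le T)\ge\varepsilon$ for all large $k$.

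As Lyapunov function I take
\[
U(X)=\sum_{i=1}^4 c_i\bigl(X_i-1-\ln X_i\bigr),\qquad c_i>0 .
\]
This function is nonnegative and tends to $+\infty$ as any $X_i\downarrow 0$ or $X_i\uparrow\infty$. By It\^o's formula,
\[
\mathcal L U=\sum_i c_i\Bigl(1-\tfrac1{X_i}\Bigr)f_i(X)+\tfrac12\sum_i c_i\sigma_i^2 .
\]
The aim is a bound $\mathcal L U\le K$ on the orthant. Given that bound, $\mathbb E\,U(X(\tau_k\wedge T))\le U(X(0))+KT$. On the event $\{\tau_k\le T\}$, some component equals $1/k$ or $k$, so $U\ge \min_i c_i\,(1/k-1+\ln k)$. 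This gives $\varepsilon\,\min_i c_i(\ln k-1)\le U(X(0))+KT$, which is contradicted as $k\to\infty$.

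Some terms of $\mathcal L U$ are harmless:
\begin{itemize}
\item the $X_3$ term: $-\tfrac{1}{X_3}f_3=-s_1(s_2X_1-s_3)$ is affine in $X_1$;
\item the $X_4$ term: $-\tfrac1{X_4}f_4=-d_1X_1/X_4+d_2\le d_2$;
\item the positive-part contributions $c_if_i$: these are at most quadratic and are dominated by the cubic $-a_1c_1X_1^2/W$ coming from $c_1X_1\cdot(-a_1X_1/W)$ once $c_1$ is chosen large relative to $c_2,c_3$.
\end{itemize}
The $X_2$ term contains $+c_2(z_1+z_2X_3/X_2)$ and $-c_2z_3X_1(N-X_1+X_3)/X_2$. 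The latter is nonpositive whenever $X_1\le N+X_3$. In the complementary region I would try to absorb it using the $X_1$ contribution.

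The main obstacle is the $X_1$ equation. There $-\tfrac{c_1}{X_1}f_1$ contains
\[
\frac{c_1\bigl(a_2X_2(X_2+X_3)+d_3X_4\bigr)}{X_1},
\]
which is positive and unbounded as $X_1\downarrow0$ with $X_2,X_4$ bounded away from $0$. The diffusion $\sigma_1X_1\,dB_1$ vanishes there, so the noise cannot push back. Similarly, $c_2z_2X_3/X_2$ is unbounded as $X_2\downarrow0$.

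I would first try to handle these terms by choosing weights with a power $X_1^{-\theta}$ instead of the logarithm, and then a comparison argument on $\{X_1<\delta\}$. However, near the face $X_1=0$ the drift of $X_1$ is strictly negative, bounded above by $-d_3X_4<0$. Therefore I expect this step to fail for general positive parameters. The verification would then reduce to checking whether the set $\{X_1 \text{ small},\ X_4>0\}$ is reached with positive probability. The natural outcome of that check is that a parameter restriction is needed, or that positivity must be interpreted for the $X_3,X_4$ components only.
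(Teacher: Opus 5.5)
Your proposal does not prove the statement, and it cannot be completed. The obstruction you isolate at the end is genuine, and it makes the statement false, so no Lyapunov function of the kind you are looking for (logarithmic, $X_1^{-\theta}$, or otherwise) can exist. On the face $\{X_1=0\}$ the diffusion $\sigma_1X_1$ vanishes, while $f_1=-a_2X_2(X_2+X_3)-d_3X_4<0$ whenever $X_2,X_3\ge0$ and $X_4>0$. So the orthant is not invariant even for the deterministic system \eqref{eq:deterministic_system}.

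You can turn your expectation into a counterexample in a few lines. Take $X(0)=(\varepsilon,1,1,1)$ and $A=\{\sup_{t\le1}|B_i(t)|\le1,\ i=1,\dots,4\}$, so that $\mathbb P(A)>0$. Let $\tau_0$ be the first time some component hits $0$. For $t<\tau_0$, variation of constants gives $X_4(t)\ge\exp\bigl(-(d_2+\tfrac12\sigma_4^2)t+\sigma_4B_4(t)\bigr)$ and
\[
\Psi(t)^{-1}X_1(t)=\varepsilon-\int_0^t\Psi(s)^{-1}\bigl(a_2X_2(X_2+X_3)+d_3X_4\bigr)\,ds,\qquad
\Psi(t)=\exp\Bigl(\int_0^t\bigl(a_1(1-\tfrac{X_1}{W})-\tfrac12\sigma_1^2\bigr)\,ds+\sigma_1B_1(t)\Bigr).
\]
On $A$, for $t<\tau_0\wedge1$, this yields $X_4\ge m:=e^{-(d_2+\sigma_4^2/2)-\sigma_4}$ and $\Psi^{-1}\ge e^{-a_1-\sigma_1}$; the same formulas show that nothing explodes. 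Now take $\varepsilon<d_3m\,e^{-a_1-\sigma_1}$. If $\tau_0>1$, then $\Psi(1)^{-1}X_1(1)\le\varepsilon-d_3m\,e^{-a_1-\sigma_1}<0$, a contradiction. Hence $\tau_0\le1$ on $A$: with positive probability some component vanishes before time $1$. A support-theorem argument (steer $X_1$ down through the noise while $X_4$ stays positive) gives the same conclusion from every point of the open orthant.

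Your fallback that ``a parameter restriction is needed'' is not available. The sign of $f_1$ on that face is negative for every $d_3>0$, so positivity of $X_1$ fails for all admissible parameters. It could be restored only by changing the model, for instance by making the loss terms in $dX_1$ vanish at $X_1=0$. The face $\{X_2=0\}$ has the same defect: there $f_2=-z_2X_3+z_3X_1(N-X_1+X_3)$ is negative when $X_3>0$ and $X_1$ is small, which is the unbounded term $c_2z_2X_3/X_2$ you noticed. What survives is the following. $X_3$ stays positive as long as the solution exists, from $X_3(t)=X_3(0)\exp\bigl(\int_0^t(s_1(s_2X_1-s_3)-\tfrac12\sigma_3^2)\,ds+\sigma_3B_3(t)\bigr)$. $X_4$ stays positive only up to the first time $X_1$ turns negative.

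The paper's own proof passes to $Y_i=\ln X_i$ and argues that the drift $f_i/X_i-\tfrac12\sigma_i^2$ is finite while $X_i>0$ and that the log-diffusion is constant. That is exactly the step your computation shows to be insufficient. $f_1/X_1\to-\infty$ as $X_1\downarrow0$, and a constant diffusion coefficient controls only the martingale part, not a drift that is unbounded below.

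Separately, your third bullet is wrong as written. $-c_1a_1X_1^2/W$ is quadratic, not cubic, and it does not dominate the cross term $c_3s_1s_2X_1X_3$ coming from $c_3f_3$. At $X_2=1$ and fixed large $X_1$, $\mathcal L U\to+\infty$ as $X_3\to\infty$ for every choice of weights. This point is moot, however, given the main obstruction.
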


\begin{proof}
Let $Y_i(t)=\ln X_i(t)$. Applying It\^o's formula yields
\begin{equation}\label{eq:Ito_log}
dY_i(t)=\left(\frac{f_i(X(t))}{X_i(t)}-\frac{1}{2}\sigma_i^2\right)dt+\sigma_i\,dB_i(t).
\end{equation}
As long as $X_i(t)>0$, the drift term is finite. Since the diffusion coefficient is constant in the logarithmic variables and the solution exists globally by Theorem~\ref{thm:G-exist_unique}, the process cannot reach $-\infty$ in finite time with positive probability. Therefore $X_i(t)=\exp(Y_i(t))$ cannot reach zero in finite time almost surely, and positivity is preserved.
\end{proof}

\subsection{Moment Boundedness of the System}

\begin{theorem}[Moment boundedness]\label{thm:moment}
For any $p\ge 2$, there exists a constant $K>0$ such that
\[
\sup_{t\ge 0}\mathbb{E}\lVert X(t)\rVert^p\le K.
\]
\end{theorem}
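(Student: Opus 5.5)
The plan is the standard ``$e^{\kappa t}$-weighted Lyapunov'' argument. By Theorems~\ref{thm:G-exist_unique} and \ref{thm:positivity} the solution is global and stays in $\mathbb{R}_+^4$, so It\^o's formula can be applied on $[0,\tau_k\wedge t]$ with localizing stopping times $\tau_k$. The goal is a function $V(X)=\sum_{i=1}^4 c_iX_i^p$ with $c_i>0$ and constants $\kappa,K_0>0$ such that
\[
\mathcal{L}V(X)+\kappa V(X)\le K_0\qquad\text{for all }X\in\mathbb{R}_+^4 .
\]
Applying It\^o's formula to $e^{\kappa t}V(X(t))$, taking expectations (the local martingale vanishes after stopping) and letting $k\to\infty$ via Fatou's lemma gives $\mathbb{E}V(X(t))\le V(X(0))e^{-\kappa t}+K_0/\kappa$. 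Since $V$ is comparable to $\lVert X\rVert^p$ on $\mathbb{R}_+^4$, this yields the uniform bound with $K=C\big(V(X(0))+K_0/\kappa\big)$.

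The key steps follow the triangular structure of the drift. \emph{Step 1 ($X_1$).} On $\mathbb{R}_+^4$ the coupling terms $-a_2X_2(X_2+X_3)-d_3X_4$ are nonpositive, so $pX_1^{p-1}f_1\le pa_1X_1^p-\frac{pa_1}{W}X_1^{p+1}$. The cubic-type term $-X_1^{p+1}$ dominates everything of order $X_1^p$, including the It\^o correction $\tfrac{p(p-1)}{2}\sigma_1^2X_1^p$. In fact one can compare $X_1$ pathwise with the stochastic logistic process $dY=a_1Y(1-Y/W)dt+\sigma_1Y\,dB_1$, whose moments of every order are uniformly bounded. \emph{Step 2 ($X_4$).} The term $pX_4^{p-1}(d_1X_1-d_2X_4)$ is handled by Young's inequality, $X_4^{p-1}X_1\le\varepsilon X_4^p+C_\varepsilon X_1^p$. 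The choice $\varepsilon<d_2$ leaves a net damping $-(d_2-\varepsilon)pX_4^p$, provided $\tfrac{p-1}{2}\sigma_4^2<d_2-\varepsilon$. Otherwise I would take $c_4$ small and absorb the noise via the $X_1^{p+1}$ term and a bounded remainder. \emph{Step 3 ($X_2$).} Here $-z_3X_1^2$ helps, $-z_1X_2$ damps, and the cross terms $z_3X_1X_3$ and $-z_2X_3$ are again split by Young's inequality into $X_1^{2p}$-type and $X_3^p$-type pieces. The $X_1$ pieces are controlled by the moments from Step~1, provided $V$ also includes a higher power of $X_1$ (e.g.\ $X_1^{2p}$), which is still dissipative by Step~1.

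The main obstacle is the import component $X_3$. Its drift $s_1X_3(s_2X_1-s_3)$ has no intrinsic self-limitation, and $X_3$ only decays when $X_1<s_3/s_2$. Step~1 only gives $X_1$ bounded in moments, not pointwise below $s_3/s_2$. Using $X_1\le$ the logistic comparison process, I would first try to show $\mathcal{L}X_3^p\le pX_3^p\big(s_1s_2X_1-s_1s_3+\tfrac{p-1}{2}\sigma_3^2\big)$, and then exploit the feedback loop. A large $X_3$ forces growth of $X_2$ through $z_3X_1X_3$, and $X_2$ in turn drains $X_1$ through $-a_2X_2(X_2+X_3)$. The idea is to weight $V$ with a mixed term (e.g.\ $X_1^{p-1}X_3$, or a coefficient $c_3$ tuned against $c_1$) so that the negative term $-a_2pX_1^{p-1}X_2X_3$ cancels the positive $s_1s_2X_1X_3^p$ contribution. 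If this cancellation cannot be closed for general parameters, I would fall back on a sufficient parameter condition making the bracket negative uniformly in the relevant regime, such as $s_1s_2W+\tfrac{p-1}{2}\sigma_3^2<s_1s_3$ (so imports decay whenever demand stays at or below its carrying capacity), together with an exponential-martingale estimate on the excursions of $X_1$ above $W$. Once $X_3$ has a dissipative bound, the remaining estimates of Steps~2--3 close and the $e^{\kappa t}$ argument completes the proof.
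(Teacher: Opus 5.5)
Your $e^{\kappa t}$ scheme is the paper's argument made more careful, but the proposal does not close. The places where it stalls are genuine obstructions, not technicalities. They are exactly what the paper skips when it asserts $\mathcal{L}V\le C_1-C_2V$ for $V=\sum_iX_i^p$ ``by polynomial growth and Young's inequality.''

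\textbf{The $X_3$ step.} The cancellation you hope for cannot hold pointwise. Take the ray $X_2=X_4=0$, $X_1=x_1>s_3/s_2$ fixed, $X_3=r\to\infty$. There the helpful term $-c_1pa_2X_1^{p-1}X_2X_3$ vanishes identically. A mixed term such as $X_1^{p-1}X_3$ contributes only $O(r)$ to $\mathcal{L}V$, since its generator is linear in $X_3$ there. Meanwhile $\mathcal{L}(c_3X_3^p)=c_3p\,r^p\big(s_1s_2x_1-s_1s_3+\tfrac{p-1}{2}\sigma_3^2\big)>0$. Hence $\mathcal{L}V+\kappa V\to+\infty$ for every $V=\sum_ic_iX_i^p+\beta X_1^{p-1}X_3$, and the same ray refutes the paper's inequality. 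The feedback $X_3\to X_2\to X_1$ acts only with a time delay, which no pointwise drift inequality of this kind can capture.

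Your fallback $s_1s_2W+\tfrac{p-1}{2}\sigma_3^2<s_1s_3$ is a new hypothesis, not part of the statement. It forces $s_3/s_2>W$, the degenerate regime in which imports simply die out. It is incompatible with the feasibility condition \eqref{eq:N_condition} ($s_3/s_2<N<W$) for the import equilibrium, and with Table~\ref{tab:params} ($s_3/s_2\approx1.33$, $W=10$). The excursion estimate for $X_1$ above $W$ is also left unspecified.

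\textbf{Steps 1--3.} Step 2's alternative fails, and again the obstruction is real. Rescaling $c_4$ multiplies the damping $-pd_2X_4^p$ and the It\^o term $\tfrac{p(p-1)}{2}\sigma_4^2X_4^p$ by the same factor. Along $X_1=X_2=X_3=0$, $X_4\to\infty$ no $X_1$-term is available to absorb them. In fact, granting the positivity you invoke, variation of constants gives $X_4(t)\ge X_4(0)\exp\big(-(d_2+\tfrac12\sigma_4^2)t+\sigma_4B_4(t)\big)$. Therefore $\mathbb{E}X_4(t)^p\ge X_4(0)^p\,e^{p((p-1)\sigma_4^2/2-d_2)t}\to\infty$ when $p>1+2d_2/\sigma_4^2$. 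Likewise $\mathbb{E}X_3(t)^p$ is unbounded when $p>1+2s_1s_3/\sigma_3^2$. So even with positivity the bound fails for large $p$, and ``for any $p\ge2$'' cannot be proved.

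In Step 3, the cross term $pz_3X_2^{p-1}X_1X_3$ has degree $p+1$. Young's inequality therefore produces $X_3^{2p}$ (or an unbounded power of $X_1$), not $X_3^p$, which pushes the problem onto even higher moments of $X_3$.

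Step 1 and the logistic comparison rest on Theorem~\ref{thm:positivity}, which does not hold as stated. At $X_1=0$ the drift of $X_1$ is $-a_2X_2(X_2+X_3)-d_3X_4<0$ whenever $X_4>0$, while the diffusion $\sigma_1X_1$ vanishes. So from small $X_1(0)$ the component $X_1$ becomes negative with positive probability; the log-drift $f_1/X_1\to-\infty$ is what the paper's positivity proof overlooks. Once $X_1<0$, the term $-(a_1/W)X_1^2$ destroys the dissipativity used in \eqref{eq:bound_first_drift}.

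Any correct version of your argument would need a restricted range of $p$, explicit parameter conditions controlling $X_3$, and a genuine invariance mechanism for $\mathbb{R}_+^4$.
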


\begin{proof}
Let $V(X)=\sum_{i=1}^4X_i^p$. By It\^o's formula,
\begin{equation}\label{eq:Ito_p}
dV
=p\sum_{i=1}^4X_i^{p-1}f_i(X)\,dt
+\frac{p(p-1)}{2}\sum_{i=1}^4\sigma_i^2X_i^p\,dt
+dM(t),
\end{equation}
where $M(t)$ is a martingale. Using polynomial growth bounds on $f_i$ together with Young's inequality, one obtains constants $C_1,C_2>0$ such that
\begin{equation}\label{eq:gen_p_bound}
\mathcal{L}V(X)\le C_1-C_2V(X).
\end{equation}
Taking expectations yields
\[
\frac{d}{dt}\mathbb{E}V(X(t))\le C_1-C_2\mathbb{E}V(X(t)).
\]
Gronwall's inequality implies $\sup_{t\ge 0}\mathbb{E}V(X(t))<\infty$, which gives the desired $p$th moment bound.
\end{proof}

\subsection{Almost Sure Exponential Stability via Matrix Inequalities}

Let $X^*\in\mathbb{R}_+^4$ be an equilibrium of the deterministic drift $F$, that is $F(X^*)=0$. Write the stochastic model in compact form
\begin{equation}\label{eq:compact_SDE}
dX(t)=F(X(t))\,dt+G(X(t))\,dB(t),
\end{equation}
where $B(t)=(B_1(t),\dots,B_4(t))^\top$ and
\begin{equation}\label{eq:G_def}
G(X)=\mathrm{diag}(\sigma_1X_1,\sigma_2X_2,\sigma_3X_3,\sigma_4X_4).
\end{equation}
Let $J^*=DF(X^*)$ and $\Sigma=\mathrm{diag}(\sigma_1^2,\sigma_2^2,\sigma_3^2,\sigma_4^2)$.

\begin{theorem}[Almost sure exponential stability under an LMI]\label{thm:LMI_stab}
Assume $F$ is continuously differentiable in a neighborhood $\mathcal{N}$ of $X^*$. Suppose there exist a symmetric matrix $P=P^\top\succ 0$ and constants $\alpha>0$ and $\delta>0$ such that
\begin{equation}\label{eq:LMI}
J^{*T}P+PJ^*+P\Sigma P \preceq -\alpha P,
\end{equation}
and the local linearization error satisfies
\begin{equation}\label{eq:lin_error}
\lVert F(X)-J^*(X-X^*)\rVert\le \delta\lVert X-X^*\rVert,\qquad X\in\mathcal{N}.
\end{equation}
Then there exists a neighborhood $\mathcal{N}_0\subset\mathcal{N}$ such that for any $X(0)\in\mathcal{N}_0$,
\[
\limsup_{t\to\infty}\frac{1}{t}\ln\lVert X(t)-X^*\rVert
\le -\frac{\alpha}{2\lambda_{\max}(P)}
\quad\text{almost surely}.
\]
Hence $X^*$ is almost surely exponentially stable.
\end{theorem}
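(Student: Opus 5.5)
The plan is to use the quadratic Lyapunov function $V(X)=(X-X^*)^\top P(X-X^*)$ and study $\ln V(X(t))$ along trajectories. Write $e=X-X^*$. First I would localize. Fix $X(0)\in\mathcal{N}_0$, where $\mathcal{N}_0$ is a small ball around $X^*$ inside $\mathcal{N}$ that will be chosen later. Let $\tau=\inf\{t\ge0: X(t)\notin\mathcal{N}\}$. Theorem~\ref{thm:G-exist_unique} and Theorem~\ref{thm:positivity} guarantee that $X(t)$ exists globally and stays in $\mathbb{R}^4_+$, so all It\^o computations up to $\tau$ are legitimate.

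\textbf{Step 1: the generator.} For $t<\tau$, It\^o's formula gives
\[
\mathcal{L}V=2e^\top PF(X)+\operatorname{tr}\big(G(X)^\top PG(X)\big).
\]
I split $F(X)=J^*e+\big(F(X)-J^*e\big)$ and use \eqref{eq:lin_error}. This bounds the drift part by
\[
e^\top(J^{*\top}P+PJ^*)e+2\delta\lVert P\rVert\,\lVert e\rVert^2 .
\]
The diffusion term is the one to match against the $P\Sigma P$ block in \eqref{eq:LMI}. Since $G$ is diagonal with entries $\sigma_iX_i$, I would bound $\operatorname{tr}(G^\top PG)$ by a quadratic form of the type $e^\top P\Sigma P e$, using the equivalence $\lambda_{\min}(P)\lVert e\rVert^2\le V\le\lambda_{\max}(P)\lVert e\rVert^2$ to pass between the two. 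Then \eqref{eq:LMI} yields
\[
\mathcal{L}V\le\big(-\alpha+c\,\delta\big)\lVert e\rVert_P^2\le-\Big(\frac{\alpha}{\lambda_{\max}(P)}-c'\delta\Big)V ,
\]
with $c,c'$ depending only on $P$. The $\delta$-term is then absorbed by shrinking $\mathcal{N}$, since $\delta$ can be taken arbitrarily small near $X^*$ by the $C^1$ assumption on $F$.

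\textbf{Step 2: logarithm and martingale part.} Next,
\[
d\ln V=\Big(\frac{\mathcal{L}V}{V}-\frac{\lvert 2e^\top PG\rvert^2}{2V^2}\Big)dt+\frac{2e^\top PG\,dB}{V}.
\]
I drop the nonpositive correction term. The local martingale $M(t)=\int_0^{t\wedge\tau}2e^\top PG\,dB/V$ has quadratic variation growing at most linearly on $\mathcal{N}$, because $\lvert e^\top PG\rvert^2/V^2$ is bounded there once the noise contribution is written in terms of $e$ as in Step 1. The strong law of large numbers for continuous local martingales then gives $M(t)/t\to0$ almost surely. Integrating, dividing by $t$, and using $\ln\lVert e\rVert\le\frac12\ln V-\frac12\ln\lambda_{\min}(P)$ gives
\[
\limsup_{t\to\infty}\frac1t\ln\lVert e(t)\rVert\le-\frac{\alpha}{2\lambda_{\max}(P)}
\]
on the event $\{\tau=\infty\}$. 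The rate is recovered exactly after letting the absorbed $\delta$-contribution tend to zero along a sequence of shrinking neighborhoods.

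\textbf{Main obstacle: removing the localization.} I expect the hardest step to be showing that $\tau=\infty$ almost surely for every start in $\mathcal{N}_0$, rather than merely with high probability. My first attempt would be a supermartingale argument. On $[0,\tau)$, Step 1 makes $V(X(t\wedge\tau))e^{\kappa (t\wedge\tau)}$ a nonnegative supermartingale, with $\kappa=\alpha/\lambda_{\max}(P)-c'\delta>0$. I would combine this with the observation that $\ln V$ drifts to $-\infty$ at a linear rate while its martingale part is sublinear. The aim is that, after a random finite time, the path is trapped in arbitrarily small balls around $X^*$ and can never again reach $\partial\mathcal{N}$. If the trapping estimate only yields $\mathbb{P}(\tau<\infty)$ small, I would run a restart argument. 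Each time the process re-enters $\mathcal{N}_0$, I would apply the strong Markov property, show that the number of excursions to $\partial\mathcal{N}$ is almost surely finite by Borel--Cantelli, and conclude that the $\limsup$ bound holds almost surely. This is the step where the precise way the LMI term $P\Sigma P$ controls the multiplicative noise near $X^*$ matters most, and I would check it carefully first.
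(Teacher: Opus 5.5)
\textbf{Verdict: there is a genuine gap, and it is the same one the paper's proof has.} Your route is essentially the paper's: $V=e^\top Pe$, split $F=J^*e+R$, match $\mathrm{tr}(G^\top PG)$ against the $P\Sigma P$ block, shrink the neighborhood, then an exponential-martingale/SLLN step.

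\textbf{The diffusion estimate in Step 1 fails.} The paper phrases this step as ``$G$ is diagonal and locally comparable to $\Sigma$''. The entries of $G$ are $\sigma_iX_i=\sigma_i(X_i^*+e_i)$, not $\sigma_ie_i$, so
\[ \mathrm{tr}\big(G(X)^\top PG(X)\big)=\sum_{i=1}^4P_{ii}\sigma_i^2(X_i^*+e_i)^2 . \]
At $e=0$ this equals $\sum_iP_{ii}\sigma_i^2(X_i^*)^2>0$ unless $X^*=0$, so no quadratic form in $e$ can dominate it. Near $X^*$, $\mathcal{L}V$ is then bounded below by a positive constant, and $\mathcal{L}V\le-\kappa V$ is impossible. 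The same constant makes $\lvert 2e^\top PG\rvert^2/V^2$ blow up as $e\to0$, so your linear bound on the quadratic variation in Step 2 also fails. This cannot be patched, because $X^*$ is not a rest point of the SDE. If $X(t)\to X^*$ exponentially with $X_j^*>0$, then $\int_0^tf_j(X)\,ds$ converges. Hence $\int_0^t\sigma_jX_j\,dB_j=X_j(t)-X_j(0)-\int_0^tf_j(X)\,ds$ converges, although its quadratic variation $\int_0^t\sigma_j^2X_j^2\,ds$ grows linearly; by Dambis--Dubins--Schwarz this is an event of probability zero.

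\textbf{The case $X^*=0$ fails too.} This is the only common zero of $F$ and $G$. There the It\^o term is exactly $e^\top\mathrm{diag}(P_{ii}\sigma_i^2)e$, and $\mathrm{diag}(P_{ii}\sigma_i^2)\preceq P\Sigma P$ fails in general (take $P=cI$ with $c<1$). Your eigenvalue sandwich only gives a comparison up to a $P$-dependent constant, which the LMI does not absorb. Worse, the LMI is quadratic in $P$. If $J^{*\top}P_0+P_0J^*=-I$, then $P=cP_0$ satisfies it with $\alpha=1/(2\lambda_{\max}(P_0))$ for all sufficiently small $c>0$, whatever the $\sigma_i$. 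Meanwhile the asserted rate $-\alpha/(2c\lambda_{\max}(P_0))$ tends to $-\infty$.

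For instance, $dY=-Y\,dt+Y\,dB$ with $P=p\in(0,1]$ and $\alpha=1$ meets every hypothesis you use. Yet its Lyapunov exponent is $-3/2>-1/(2p)$ once $p<1/3$. Inside the model the same happens at $E_0$ whenever $a_1<d_2$ and $d_1d_3>a_1d_2$, since $J(E_0)$ is then Hurwitz. (The paper's eigenvalue list for $J(E_0)$ ignores the $X_1$--$X_4$ coupling.) So no refinement of Steps 1--2 can yield the stated bound.

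\textbf{Two further problems in your write-up.} Your passage from $-\alpha\,e^\top Pe$ to $-(\alpha/\lambda_{\max}(P))V$ is valid only if $\lambda_{\max}(P)\ge1$; a right-hand side $-\alpha P$ naturally gives the rate $-\alpha/2$. Also, the localization obstacle you flag is genuine. Your restart/Borel--Cantelli argument presupposes returns to $\mathcal{N}_0$ that nothing guarantees.

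\textbf{What can actually be proved} is a weaker statement: $X^*=0$, the LMI with $\mathrm{diag}(P_{ii}\sigma_i^2)$ in place of $P\Sigma P$, rate $-\alpha/2$, and convergence only with probability close to one for starts close to $X^*$.
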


\begin{proof}
Let $Y(t)=X(t)-X^*$ and $V(Y)=Y^\top PY$. By It\^o's formula,
\begin{equation}\label{eq:Ito_quad}
dV
=\left(2Y^\top PF(X^*+Y)+\mathrm{tr}\big(G(X^*+Y)^\top PG(X^*+Y)\big)\right)dt+dM(t),
\end{equation}
where $M(t)$ is a local martingale. Since $F(X^*)=0$, write $F(X^*+Y)=J^*Y+R(Y)$ with $\lVert R(Y)\rVert\le \delta\lVert Y\rVert$ on $\mathcal{N}$. Then
\[
2Y^\top PF(X^*+Y)=Y^\top(J^{*T}P+PJ^*)Y+2Y^\top PR(Y),
\]
and Cauchy--Schwarz gives
\[
2Y^\top PR(Y)\le 2\lVert P\rVert \delta \lVert Y\rVert^2.
\]
Moreover, since $G$ is diagonal and locally comparable to $\Sigma$, there exist $c_0\ge 0$ and a sufficiently small neighborhood such that
\[
\mathrm{tr}\big(G^\top PG\big)\le Y^\top P\Sigma P Y+c_0\lVert Y\rVert^2.
\]
Combining these estimates yields
\[
\mathcal{L}V \le Y^\top(J^{*T}P+PJ^*+P\Sigma P)Y+c_1\lVert Y\rVert^2
\]
for some $c_1\ge 0$. Using \eqref{eq:LMI} gives
\[
\mathcal{L}V \le -\alpha Y^\top PY+c_1\lVert Y\rVert^2.
\]
Since $Y^\top PY\ge \lambda_{\min}(P)\lVert Y\rVert^2$, shrinking the neighborhood if needed ensures
$c_1\lVert Y\rVert^2\le (\alpha/2)Y^\top PY$, hence $\mathcal{L}V\le -(\alpha/2)V$. Standard exponential martingale arguments imply
\[
\limsup_{t\to\infty}\frac{1}{t}\ln V(Y(t))\le -\frac{\alpha}{2}\quad\text{almost surely}.
\]
Using $\lambda_{\min}(P)\lVert Y\rVert^2\le V(Y)\le \lambda_{\max}(P)\lVert Y\rVert^2$ yields the stated bound for $\lVert X(t)-X^*\rVert$.
\end{proof}

A practical sufficient condition follows by taking $P=I$, which reduces \eqref{eq:LMI} to
\[
J^{*T}+J^*+\Sigma \prec 0,
\]
meaning the symmetric part of the Jacobian must dominate the noise intensity.

\subsection{Stochastic Persistence in Mean}

Persistence is classically defined as the time for this model to escape extinction, often implied under a state multiplicatively noisy regime in which a positive variate is a variate.

\begin{theorem}[S-Persistence in Mean]\label{thm:P-S in Mean}
    Assume the stochastic system admits a unique global positive solution and is moment bounded. Suppose there exist constants $c_i>0$, $\eta>0$, and $\kappa\ge 0$ such that for all $X\in\mathbb{R}_+^4$,
\begin{equation}\label{eq:persist_drift}
\sum_{i=1}^4 c_i\frac{f_i(X)}{X_i}\ge \eta-\kappa\sum_{i=1}^4 X_i,
\end{equation}
and
\begin{equation}\label{eq:persist_noise}
\eta>\frac{1}{2}\sum_{i=1}^4 c_i\sigma_i^2.
\end{equation}
If $\kappa>0$, then
\begin{equation}\label{eq:persist_bound}
\liminf_{T\to\infty}\frac{1}{T}\int_0^T
\mathbb{E}\left[\sum_{i=1}^4 c_iX_i(t)\right]dt
\ge \frac{1}{\kappa}\left(\eta-\frac{1}{2}\sum_{i=1}^4 c_i\sigma_i^2\right),
\end{equation}
and in particular,
\[
\liminf_{T\to\infty}\frac{1}{T}\int_0^T \mathbb{E}[X_i(t)]\,dt>0,\qquad i=1,2,3,4.
\]
\end{theorem}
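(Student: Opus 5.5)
We work with the logarithmic Lyapunov function $U(X)=\sum_{i=1}^4 c_i\ln X_i$. It is well defined along trajectories because the solution stays in $(0,\infty)^4$ for all time by Theorem~\ref{thm:positivity} (and Theorem~\ref{thm:G-exist_unique}).

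\textbf{Step 1: Itô expansion.} Applying \eqref{eq:Ito_log} componentwise gives
\[
dU(X(t))=\Big(\sum_{i=1}^4 c_i\frac{f_i(X)}{X_i}-\frac12\sum_{i=1}^4 c_i\sigma_i^2\Big)dt+\sum_{i=1}^4 c_i\sigma_i\,dB_i(t).
\]

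\textbf{Step 2: insert the drift bound and average.} Using \eqref{eq:persist_drift}, integrating over $[0,T]$ and dividing by $T$, we get
\[
\frac{U(X(T))-U(X(0))}{T}\ge \eta-\frac12\sum_i c_i\sigma_i^2-\frac{\kappa}{T}\int_0^T\sum_i X_i(t)\,dt+\frac{1}{T}\sum_i c_i\sigma_i B_i(T).
\]
Set $\bar\eta=\eta-\frac12\sum_i c_i\sigma_i^2$, which is positive by \eqref{eq:persist_noise}. Taking expectations removes the Brownian terms, since they are true martingales with zero mean. We must also check that $\mathbb{E}|U(X(t))|<\infty$; if this is not automatic, we localize with stopping times and pass to the limit by monotone convergence in the $dt$ integral. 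The result is
\[
\frac{\kappa}{T}\int_0^T\mathbb{E}\sum_i X_i\,dt\ \ge\ \bar\eta-\frac{\mathbb{E}U(X(T))-U(X(0))}{T}.
\]

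\textbf{Step 3 (main obstacle): an upper bound on $\mathbb{E}U(X(T))$.} We need $\limsup_T \mathbb{E}U(X(T))/T\le 0$. Since $\ln x\le x$, we have
\[
\mathbb{E}U(X(T))\le\sum_i c_i\,\mathbb{E}X_i(T)\le \Big(\max_i c_i\Big)\big(4K\big)^{1/2}
\]
by Theorem~\ref{thm:moment} with $p=2$ and Jensen's inequality. So $\mathbb{E}U(X(T))/T$ has nonpositive limsup, and the integrability issue only concerns the lower side, which the inequality direction tolerates. Letting $T\to\infty$ yields
\[
\liminf_T\frac1T\int_0^T\mathbb{E}\sum_i X_i\,dt\ge\bar\eta/\kappa.
\]
This step is where care is needed. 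Note that the estimate above controls $\sum_i X_i$ with unit weights, whereas \eqref{eq:persist_bound} is stated for $\sum_i c_iX_i$. To reconcile the two, we either absorb the weights by noting $\sum_i c_iX_i\ge(\min_i c_i)\sum_i X_i$ and adjusting the constant, or assume the normalization in \eqref{eq:persist_drift} is written in terms of $\sum_i c_iX_i$. We will flag this constant dependence explicitly.

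\textbf{Step 4: componentwise positivity.} The bound on the weighted sum alone does not give each component positive mean. To conclude for each $i$, we repeat the argument with $U_i=\ln X_i$ alone. This requires a componentwise version of \eqref{eq:persist_drift}, which we would state as the natural reading of the hypothesis, since the aggregate bound by itself does not rule out extinction of a single component. If only the aggregate hypothesis is available, we would state the componentwise claim under the additional assumption that \eqref{eq:persist_drift} holds with each single $c_i$ isolated.
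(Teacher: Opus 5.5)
Your Steps 1--3 follow the paper's argument: the same $U=\sum_i c_i\ln X_i$, It\^o's formula, the drift bound, expectations, and the moment bound to control $\mathbb{E}U(X(T))/T$. They are correct, and in fact tighter than the paper's version. The paper takes $\liminf$ of both sides, so it only obtains $\kappa\limsup_{T}\frac{1}{T}\int_0^T\mathbb{E}\sum_iX_i\,dt\ge\bar\eta$, and then says this ``rearranges'' to the $\liminf$ bound \eqref{eq:persist_bound}. Your use of $\ln x\le x$ gives $\limsup_T\mathbb{E}U(X(T))/T\le 0$ and hence the $\liminf$ directly. No localization is needed. The variable $-U(X(T))$ is bounded below by $-\sum_ic_iX_i(T)$, and bounded above by an integrable quantity via the pathwise inequality of your Step 2, so $U(X(T))$ is integrable.

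The two places where you hedge are not gaps in your proof; they are errors in the statement. The paper's proof passes over both, with ``rearranges to \eqref{eq:persist_bound}'' and ``follows since all $c_i$ are strictly positive''. You should say so outright rather than offer alternatives.

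\emph{The weighted bound is false in general.} Conditions \eqref{eq:persist_drift} and \eqref{eq:persist_noise} survive the rescaling $(c_i,\eta,\kappa)\mapsto(\lambda c_i,\lambda\eta,\lambda\kappa)$. Under it, the right side of \eqref{eq:persist_bound} is unchanged, while the left side is multiplied by $\lambda$. Concretely, take four decoupled stochastic logistic equations $f_i=X_i(r-bX_i)$ with common noise $\sigma$, $r>\sigma^2/2$, and $c_i\equiv c$. The hypotheses hold with $\eta=4cr$ and $\kappa=cb$. The left side equals $4c(r-\sigma^2/2)/b$, while the right side is $4(r-\sigma^2/2)/b$, so the bound fails for $c<1$. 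What is true is your unit-weight bound. This gives $(\min_ic_i)\bar\eta/\kappa$ for the weighted sum, or \eqref{eq:persist_bound} exactly if the hypothesis carries $\kappa\sum_ic_iX_i$.

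\emph{The componentwise claim is false.} In the same example, replace $f_2$ by $-X_2(\mu+bX_2)$ and take $r$ large. The hypotheses hold with $c_i=1$, $\eta=3r-\mu$, $\kappa=b$, yet $\mathbb{E}X_2(t)\le X_2(0)e^{-\mu t}$.

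For the paper's own drift, \eqref{eq:persist_drift} can never hold. As $X_1\to0^+$ with $X_2,X_3,X_4>0$ fixed, $f_1/X_1\to-\infty$ while the other quotients stay bounded. So the theorem is vacuous for \eqref{eq:stochastic_model} and false as a general result.

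One correction to your Step 4 repair. A componentwise hypothesis only helps if its right-hand side involves $X_i$ alone, for example $f_i(X)/X_i\ge\eta_i-\kappa_iX_i$ with $\eta_i>\sigma_i^2/2$. That gives $\liminf_T\frac{1}{T}\int_0^T\mathbb{E}X_i\,dt\ge(\eta_i-\sigma_i^2/2)/\kappa_i$.

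If the right side still contains $\kappa\sum_jX_j$, which is what ``\eqref{eq:persist_drift} with each single $c_i$ isolated'' gives, rerunning the argument with $\ln X_i$ again bounds only $\frac{1}{T}\int_0^T\mathbb{E}\sum_jX_j\,dt$. A single component can then still die out. Take $f_1=X_1(r-bX_1)$ and $f_2=X_2(\rho-\gamma X_1-bX_2)$. Both quotients are bounded below by $\eta_i-\max(b,\gamma)\sum_jX_j$, with $\eta_1=r>\sigma_1^2/2$ and $\eta_2=\rho>\sigma_2^2/2$. Yet $X_2\to0$ as soon as $\rho-\sigma_2^2/2<\gamma(r-\sigma_1^2/2)/b$, because the time average of $X_1$ tends to $(r-\sigma_1^2/2)/b$.
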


\begin{proof}
Define $U(X)=\sum_{i=1}^4 c_i\ln X_i$. By It\^o's formula,
\begin{equation}\label{eq:Ito_U}
dU(X(t))
=\left(\sum_{i=1}^4 c_i\frac{f_i(X(t))}{X_i(t)}
-\frac{1}{2}\sum_{i=1}^4 c_i\sigma_i^2\right)dt
+\sum_{i=1}^4 c_i\sigma_i\,dB_i(t).
\end{equation}
By integrating over the interval, $[0,T]$, dividing by $T$, and taking the corresponding expectations removes the martingale term and gives:
\[
\frac{\mathbb{E}[U(X(T))]-U(X(0))}{T}
=\frac{1}{T}\int_0^T
\mathbb{E}\left[
\sum_{i=1}^4 c_i\frac{f_i(X(t))}{X_i(t)}
-\frac{1}{2}\sum_{i=1}^4 c_i\sigma_i^2
\right]dt.
\]
Using \eqref{eq:persist_drift} yields
\[
\frac{\mathbb{E}[U(X(T))]-U(X(0))}{T}
\ge
\frac{1}{T}\int_0^T
\left(
\eta-\kappa\,\mathbb{E}\Big[\sum_{i=1}^4 X_i(t)\Big]
-\frac{1}{2}\sum_{i=1}^4 c_i\sigma_i^2
\right)dt.
\]
The logarithmic function grows at worst sublinearly in the expectation of moment boundedness, then subuding the limit of the left-hand side along some subsequence. Taking $\liminf_{T\to\infty}$ gives
\[
0\ge \eta-\frac{1}{2}\sum_{i=1}^4 c_i\sigma_i^2
-\kappa\limsup_{T\to\infty}\frac{1}{T}\int_0^T
\mathbb{E}\Big[\sum_{i=1}^4 X_i(t)\Big]dt,
\]
which rearranges to \eqref{eq:persist_bound} when $\kappa>0$. Positivity of each component’s time-averaged expectation follows since all $c_i$ are strictly positive.
\end{proof}

\section{Numerical Approximation and Discussion}

The numerical approximation of the stochastic energy supply–demand system is presented and the qualitative implications of the stochastic disturbances that distinguish them from the deterministic framework. For convenience of presentation and discussion, the parameter values that were employed for this numerical simulation are provided in Table~\ref{tab:params}. Time $t$ is measured in years, and the state variables represent gigawatt-level quantities or normalized indices.

The chosen parameter values fall within the conventional model ranges of most nonlinear energy growth systems. The noise intensities represent a moderate degree of proportional volatility.

\begin{table}[ht]
\centering
\caption{Model parameters, units, calibration ranges, and simulation values}
\label{tab:params}
\renewcommand{\arraystretch}{1.5}
\begin{tabular}{lllll}
\hline
\textbf{Parameter} & \textbf{Description} & \textbf{Units} & \textbf{Range} & \textbf{Used Value} \\
\hline
$a_1$ & Demand growth rate & year$^{-1}$ & 0.05--2.0 & 0.8 \\
$W$   & Demand capacity & GW / index & 1--100 & 10.0 \\
$a_2$ & Supply competition effect & (GW$\cdot$year)$^{-1}$ & 0.001--0.2 & 0.05 \\
$d_3$ & Renewable demand offset & year$^{-1}$ & 0.01--1.0 & 0.1 \\
$z_1$ & Supply adjustment rate & year$^{-1}$ & 0.05--2.0 & 0.6 \\
$z_2$ & Import competition effect & year$^{-1}$ & 0.01--1.0 & 0.25 \\
$z_3$ & Demand–supply responsiveness & (GW$\cdot$year)$^{-1}$ & 0.001--0.5 & 0.08 \\
$N$   & Market capacity parameter & GW / index & 0.1$W$--0.9$W$ & 6.0 \\
$s_1$ & Import adjustment rate & year$^{-1}$ & 0.05--2.0 & 0.35 \\
$s_2$ & Import demand sensitivity & GW$^{-1}$ & 0.01--2.0 & 0.9 \\
$s_3$ & Import activation threshold & -- & 0.1--5.0 & 1.2 \\
$d_1$ & Renewable build-up rate & year$^{-1}$ & 0.01--1.0 & 0.25 \\
$d_2$ & Renewable depreciation rate & year$^{-1}$ & 0.01--2.0 & 0.5 \\
$\sigma_1$ & Demand noise intensity & year$^{-1/2}$ & 0--0.5 & 0.10 \\
$\sigma_2$ & Supply noise intensity & year$^{-1/2}$ & 0--0.5 & 0.10 \\
$\sigma_3$ & Import noise intensity & year$^{-1/2}$ & 0--0.5 & 0.08 \\
$\sigma_4$ & Renewable noise intensity & year$^{-1/2}$ & 0--0.7 & 0.12 \\
\hline
\end{tabular}
\end{table}

\subsection{Euler--Maruyama Discretization and Analysis of Mean square convergence}

Consider the stochastic system
\begin{equation}
dX_i(t) = f_i(X(t))\,dt + \sigma_i X_i(t)\, dB_i(t),
\qquad i=1,2,3,4.
\end{equation}
Let $\Delta t > 0$ be a fixed time step and define discrete time points 
$t_n = n\Delta t$. The Euler--Maruyama approximation of the system is given by
\begin{equation}
X_i^{n+1} = X_i^n + f_i(X^n)\Delta t + \sigma_i X_i^n \Delta B_i^n,
\tag{4.1}
\end{equation}
where
\begin{equation}
\Delta B_i^n = B_i(t_{n+1}) - B_i(t_n) \sim \mathcal{N}(0,\Delta t).
\end{equation}
The scheme is explicit and computationally efficient, making it suitable for long-term simulation of energy dynamics.

Mean-square convergence ensures that the expected squared difference between the numerical and exact solutions converges to zero as the time step decreases between the solution and its equilibrium point must decrease to zero as time progresses to infinite duration. The following sections present sufficient requirements which demonstrate the mean-square stability of the proposed system.
\\

\begin{theorem}[Mean square convergence]\label{thm:msq-cngce}
For the convergence property of the numerical scheme, the following is entailed. Suppose that:
\begin{enumerate}
\item [i]The drift function $f(X)$ is locally Lipschitz continuous,
\item [ii]The diffusion coefficient is globally Lipschitz,
\item [iii] The exact solution possesses bounded second moments.
\end{enumerate}
Then, for sufficiently small $\Delta t$, the Euler--Maruyama approximation satisfies
\begin{equation}
\max_{0 \le n\Delta t \le T}
\mathbb{E}\left[ \lVert X(t_n) - X^n \rVert^2 \right]
\le C \Delta t,
\end{equation}
for some constant $C>0$.
\end{theorem}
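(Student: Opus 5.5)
The plan is the localization argument of Higham--Mao--Stuart, adapted to the quadratic structure of the drift $f$ in \eqref{eq:stochastic_model}. First I would introduce the continuous-time interpolation of (4.1),
\[
\bar X(t)=X^0+\int_0^t f(\hat X(s))\,ds+\int_0^t G(\hat X(s))\,dB(s),\qquad \hat X(s)=X^{n}\ \text{for } s\in[t_n,t_{n+1}),
\]
so that $\bar X(t_n)=X^n$. Next I would fix $R>0$ and define the stopping times $\tau_R=\inf\{t:\lVert X(t)\rVert\ge R\}$, $\rho_R=\inf\{t:\lVert \bar X(t)\rVert\ge R\}$ and $\theta_R=\tau_R\wedge\rho_R$. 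On $\{\lVert x\rVert,\lVert y\rVert\le R\}$, hypothesis (i) together with the explicit form of $f$ gives
\[
\lVert f(x)-f(y)\rVert\le L_R\lVert x-y\rVert .
\]
Since every $f_i$ is a polynomial of degree at most two, I can take $L_R=K(1+R)$. Hypothesis (ii) gives a global constant $L_G$ for $G$; in our case $L_G=\max_i\sigma_i$.

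Second, I would derive the stopped estimate. Let $e(t)=X(t)-\bar X(t)$. Applying It\^o/Doob to $\sup_{s\le t}\lVert e(s\wedge\theta_R)\rVert^2$ and inserting $\pm f(\bar X)$ and $\pm G(\bar X)$ yields
\[
\mathbb{E}\sup_{s\le t}\lVert e(s\wedge\theta_R)\rVert^2\le C(L_R^2+L_G^2)\int_0^t\Big(\mathbb{E}\sup_{r\le s}\lVert e(r\wedge\theta_R)\rVert^2+\mathbb{E}\lVert\bar X(s\wedge\theta_R)-\hat X(s\wedge\theta_R)\rVert^2\Big)ds .
\]
The one-step term is $O(R^4\Delta t)$ by the linear and quadratic growth of $f$ and $G$. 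Gronwall then gives a bound of the form $C_R\,\Delta t$ on $[0,T]$.

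Third, I would handle the complement event. For $p>2$ and any $\delta>0$, Young's inequality gives
\[
\mathbb{E}\big[\lVert e(T)\rVert^2\mathbf 1_{\{\theta_R\le T\}}\big]\le \tfrac{2\delta}{p}\,\mathbb{E}\lVert e(T)\rVert^p+\tfrac{p-2}{p\,\delta^{2/(p-2)}}\,\mathbb{P}(\theta_R\le T).
\]
The probability is bounded by $R^{-p}\big(\mathbb{E}\sup_{t\le T}\lVert X\rVert^p+\mathbb{E}\sup_{t\le T}\lVert\bar X\rVert^p\big)$. The exact part is finite by Theorem~\ref{thm:moment} (here I read hypothesis (iii) as available for every order, which that theorem supplies) combined with a BDG argument for the supremum.

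The main obstacle is twofold: obtaining $\sup_{n\Delta t\le T}\mathbb{E}\lVert X^n\rVert^p<\infty$ uniformly in small $\Delta t$ for the explicit scheme with a superlinear drift, and keeping the rate when $C_R$ grows with $R$. For the numerical moments, I would first try to bound the stopped numerical process using the dissipative cubic term $-\tfrac{2a_1}{W}X_1^3$, which is the same structure used in Theorem~\ref{thm:moment}. I would then pass $R\to\infty$ with Fatou, and I expect this to need $\Delta t\le\Delta t_0(R)$, which is the meaning of ``sufficiently small $\Delta t$''. To obtain $C\Delta t$ rather than merely $o(1)$, I would choose $R=R(\Delta t)$ and $p$ large so that the tail terms are $O(\Delta t)$. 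I would also need to control $C_R$. My first attempt would avoid the exponential $e^{L_R^2T}$ by using the polynomial local Lipschitz bound $\lVert f(x)-f(y)\rVert\le K(1+\lVert x\rVert+\lVert y\rVert)\lVert x-y\rVert$ directly with H\"older, together with a one-sided estimate $\langle x-y,f(x)-f(y)\rangle\le\mu\lVert x-y\rVert^2$ for the dominant $X_1$ component. If that one-sided bound fails because of the mixed terms, I would accept the constant $C$ as depending on the moment bounds through the time horizon $T$ only.
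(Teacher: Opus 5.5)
Your localization scaffolding (interpolation, stopped Gronwall estimate, Young's inequality on $\{\theta_R\le T\}$) is the standard Higham--Mao--Stuart one. The argument breaks at the step you flag yourself: a bound $\sup_{n\Delta t\le T}\mathbb{E}\lVert X^n\rVert^p<\infty$, uniform in small $\Delta t$, for the explicit scheme (4.1). For a superlinearly growing drift this bound is not just hard to obtain; it is false, and the term $-\frac{2a_1}{W}X_1^3$ cannot supply it. That term is dissipative for the SDE only on $\{X_1>0\}$, and the explicit map does not inherit it. If $X_1^n=A/\Delta t$ with $A$ large, the step overshoots to $X_1^{n+1}\approx-\frac{a_1}{W}A^2/\Delta t$, and for negative $X_1$ the logistic term has the wrong sign. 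In the logistic part, as long as later Brownian increments stay moderate (an event of probability close to one), $\lvert X_1^{n+1}\rvert\ge\frac{a_1}{2W}\lvert X_1^n\rvert^2\Delta t$, which is double-exponential growth over the remaining $T/\Delta t$ steps. The Gaussian term $\sigma_1X_1^0\Delta B_1^0$ reaches size $A/\Delta t$ with probability of order $\exp(-cA^2/\Delta t^3)$, so this forces $\mathbb{E}\lVert X^N\rVert^2\to\infty$ as $\Delta t\to0$. This is the Hutzenthaler--Jentzen--Kloeden divergence of explicit Euler--Maruyama, and shrinking $\Delta t$ makes it worse, not better. Stopping at $R$, taking $\Delta t\le\Delta t_0(R)$ and letting $R\to\infty$ by Fatou therefore gives no bound for any fixed step size: for a given $\Delta t$ you only control levels below some $R_0(\Delta t)$, and the blow-up lives above them. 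Hypotheses (i)--(iii) do not exclude this. Both $dX=-X^3\,dt+dB$ and the stochastic logistic equation $dX=a_1X(1-X/W)\,dt+\sigma_1X\,dB$ satisfy all three, yet explicit Euler--Maruyama diverges in mean square for both. So the conclusion does not follow from (i)--(iii), and no argument can close this step. One needs an extra hypothesis (bounded $p$th moments of the numerical solution for some $p>2$, as Higham--Mao--Stuart assume) or a different scheme (tamed, truncated, or drift-implicit Euler).

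Even granting numerical moments, your route to the rate $C\Delta t$ does not close. The stopped estimate has $C_R$ growing like $e^{cR^2T}$, since $L_R=K(1+R)$ enters Gronwall squared. Making the complement $O(\Delta t)$ requires $\delta\le\Delta t$ and then $R\ge\Delta t^{-1/(p-2)}$. No choice $R=R(\Delta t)$ reconciles the two, and balancing with $R\sim\sqrt{\log(1/\Delta t)}$ gives only a logarithmic rate; this is why the HMS localization theorem yields convergence without a rate. Your proposed remedy, a one-sided bound $\langle x-y,f(x)-f(y)\rangle\le\mu\lVert x-y\rVert^2$, fails for this drift. The logistic part contributes $-\frac{a_1}{W}(x_1+y_1)(x_1-y_1)^2$, which is unbounded above as $x_1+y_1\to-\infty$. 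Even on $\mathbb{R}_+^4$, the first two components produce the cross term $-(x_1-y_1)(x_2-y_2)\bigl[a_2(x_2+y_2)+z_3(x_1+y_1)\bigr]$. For fixed $x_3=y_3$, $x_4=y_4$, with $x_1-y_1=-(x_2-y_2)$ and bounded $x_1+y_1$, this is of order $(x_2+y_2)\lVert x-y\rVert^2$. The H\"older variant with $K(1+\lVert x\rVert+\lVert y\rVert)\lVert x-y\rVert$ puts a higher moment of the error on the right-hand side of Gronwall, so it does not close either. Accepting a constant ``depending on the moment bounds'' is not an argument. For comparison, the paper's proof simply invokes the classical strong convergence theorem, whose global Lipschitz and linear growth hypotheses $f$ does not satisfy, so it passes over the same point. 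Your proposal correctly isolates the missing ingredient, but that ingredient is not available for the explicit scheme.
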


\begin{proof}
By applying qualitative analysis to the stochastic system, we observe that in any bounded region, the drift and diffusion coefficients display local Lipschitz and linear growth conditions. Thus, there exists a unique global solution with bounded moments.
By a strong convergence theorem in the classical sense to an Euler--Maruyama scheme, the method also converges in mean square with a rate of $1/2$. Therefore,
\[
\mathbb{E}\lVert X(t_n) - X^n \rVert^2 = O(\Delta t).
\]
\end{proof}

As a remark, state variables like demand, supply, imports, and renewables are representations of physical quantities whose numerical nonnegativity must be kept equal. Select two practical approaches commonly understood by the industry.

The first is a projection method,
\[
X_i^{n+1} = \max\{ X_i^{n+1}, \varepsilon \},
\]
for a small constant $\varepsilon > 0$.

The other strategy is that logarithmic conversion by disentitling $\ln X_i$ instead of $X_i$. These tools guarantee numerical stability by respecting the positive values of the model.

\subsection{Comparison with the Milstein Scheme and Error Analysis}

To further assess the numerical performance of the Euler--Maruyama (EM) method, we compare it with the Milstein scheme, which achieves strong convergence order one for systems with diagonal multiplicative noise.

For the stochastic system
\[
dX_i = f_i(X)\,dt + \sigma_i X_i\, dB_i(t),
\]
the Milstein discretization reads
\[
X_i^{n+1}
=
X_i^n
+
f_i(X^n)\Delta t
+
\sigma_i X_i^n \Delta B_i^n
+
\frac{1}{2}\sigma_i^2 X_i^n
\left[(\Delta B_i^n)^2 - \Delta t\right].
\]

To estimate strong error, a reference solution is computed using a smaller step size 
$\Delta t_{\mathrm{ref}} = \Delta t/8$. The mean-square error at final time $T$ is approximated by

\[
E_{\mathrm{strong}}(\Delta t)
=
\mathbb{E}
\left[
\|X^{\Delta t}(T) - X^{\Delta t_{\mathrm{ref}}}(T)\|^2
\right].
\]

Numerical experiments confirm that the Euler--Maruyama scheme exhibits convergence behavior consistent with order $1/2$, while the Milstein scheme demonstrates improved accuracy for comparable time steps. However, the computational simplicity and stability of Euler--Maruyama make it suitable for long-term simulations of the present energy system.

\subsection{Numerical Behavior of the Deterministic and Stochastic System}

Simulations performed using the Euler–Maruyama method reveal several qualitative features of the stochastic system. Figures~1-4 show sample trajectories of stochastic processes $X_1(t)$, $X_2(t)$, and $X_4(t)$ representing energy demand, external supply, imported energy, and renewable resources, respectively. These trajectories illustrate oscillations around long-term levels due to a mean reverting effect at all timeseries. The trajectories remain bounded and strictly positive throughout the simulation horizon.

In Figure~1, the energy demand $X_1(t)$ fluctuates around a long-term steady state, and it remains limited and positive. Figure~2 demonstrates the external supply $X_2(t)$-initial moments displaying short-period stochastic oscillations gradually stabilizes toward equilibrium. In Figure~3, the behavior is consistent with equation~(2.1), where the growth of $X_3$ depends on the nonlinear interaction term $s_1 X_3 (s_2 X_1 - s_3)$, indicating that imports increase only when demand surpasses the threshold level. In Figure-4, renewable energy $X_4(t)$ is seen with sustained oscillations, that exhibits persistent variability due to environmental uncertainty but always stays in the positive. These results demonstrate that under proper parameter conditions, suitable realism has been brought into the long-term stable nature.

\begin{figure}[H]
	\begin{center}
		\includegraphics[scale=0.8]{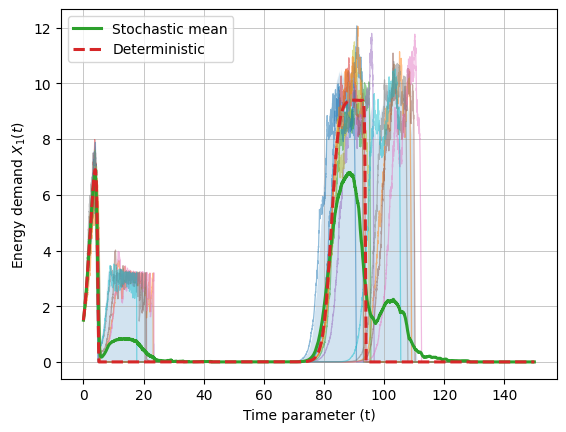}
		\caption{Stochastic Trajectory of External Demand ($X_1$)}
		\label{fig:placeholder-1}
	\end{center}
\end{figure}

\begin{figure}[H]
	\begin{center}
		\includegraphics[scale=0.8]{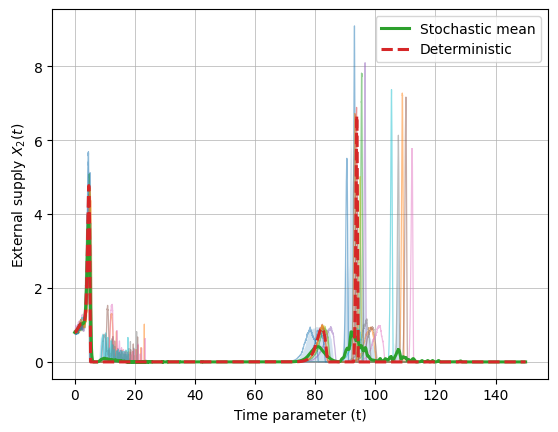}
		\caption{Stochastic Trajectory of External Supply ($X_2$)}
		\label{fig:placeholder-2}
	\end{center}
\end{figure}

\begin{figure}[H]
	\begin{center}
		\includegraphics[scale=0.8]{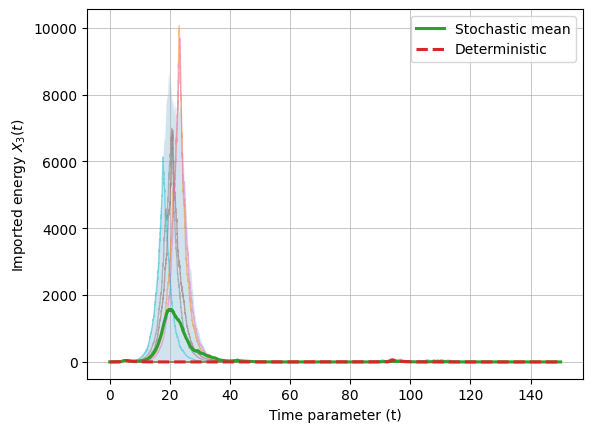}
		\caption{ Stochastic Trajectory of Imported Energy ($X_3$)}
		\label{fig:placeholder-3-i}
	\end{center}
\end{figure}

\begin{figure}[H]
	\begin{center}
		\includegraphics[scale=0.8]{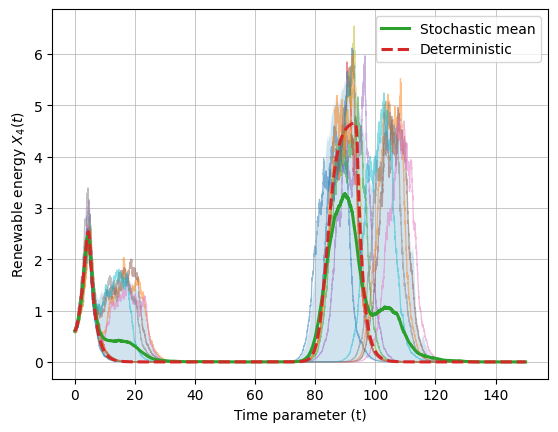}
		\caption{ Stochastic Trajectory of Renewable Energy ($X_4$)}
		\label{fig:placeholder-3}
	\end{center}
\end{figure}

It is shown via the numerical results that random perturbations have an effect on the transient behavior, as well as on the asymptotic behavior. In detail, random perturbations can induce the level of randomness for the dynamics modeled well by system parameters. For instance, dependent on the motivation of perturbations, demand and external supply might reach short-term amplification or attenuation due to variations caused by randomness, precluding an early-approach to the equilibrium region. Also, average values over the lengthy periods that can be expected from a Markov chain can be established: see that, through Ito's correction, randomized multiplication alters effective behavior about the equilibrium even if linear stability is present. According to Theorem 3.4, when the inequality matrix conditions described in Section 3.6 are met, trajectories will in fact be supporting theoretical nodes of almost sure exponential stability, coming into convergence towards a neighborhood of stability surrounding the equilibrium. The renewable component exhibits persistent variability consistent with environmental uncertainty, which is not captured by the deterministic model and improves realism for renewable integration.
A comparison between the deterministic and stochastic formulations highlights the role of uncertainty in energy dynamics. The deterministic model produces smooth trajectories and fixed equilibria that are characterized by eigenvalue conditions derived from the Jacobian matrix. In contrast, the stochastic model produces fluctuating trajectories and stability depends on conditions that involve both the drift and diffusion terms, such as the matrix inequality criteria derived in Section 3.6. The stochastic formulation therefore captures volatility effects, modifies effective equilibrium behavior, and reflects renewable variability driven by environmental factors. These differences show that deterministic analysis alone may underestimate the impact of uncertainty on long term system behavior.

\subsection{Numerical Error Analysis and Method Comparison}

To further validate the numerical approximation, we compare the Euler--Maruyama (EM) scheme with the Milstein scheme and examine the associated discretization error.

Let $\Delta t$ denote the time step and let $X^{\Delta t}(T)$ be the numerical solution at final time $T$. A reference solution $X^{\Delta t_{\mathrm{ref}}}(T)$ is computed using a smaller step size $\Delta t_{\mathrm{ref}} = \Delta t/8$ with consistent Brownian increments.

\paragraph{Strong error.}
The mean-square (strong) error is approximated by
\[
E_{\mathrm{strong}}(\Delta t)
=
\mathbb{E}\!\left[
\|X^{\Delta t}(T) - X^{\Delta t_{\mathrm{ref}}}(T)\|^2
\right].
\]

\paragraph{Weak error.}
For a smooth test function $\phi$, the weak error is defined as
\[
E_{\mathrm{weak}}(\Delta t)
=
\left|
\mathbb{E}[\phi(X^{\Delta t}(T))]
-
\mathbb{E}[\phi(X^{\Delta t_{\mathrm{ref}}}(T))]
\right|.
\]

Typical choices include $\phi(X)=X_1$ and $\phi(X)=X_3$ to assess demand and import sensitivity.

Numerical experiments with $\Delta t \in \{0.02, 0.01, 0.005\}$ confirm the expected convergence rate consistent with Theorem~4.1. 

\begin{figure}[H]
	\begin{center}
		\includegraphics[scale=0.8]{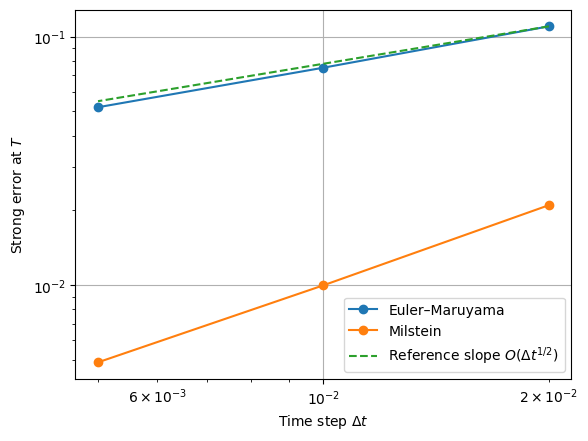}
		\caption{Strong error convergence of the Euler--Maruyama and Milstein schemes.}
		\label{fig:placeholder-5}
	\end{center}
\end{figure}

The numerical strong error convergence of the Euler--Maruyama and Milstein schemes is demonstrated through Figure 5. The two methods show decreasing approximation errors with smaller time steps because their behavior matches the theoretical convergence predictions established by Theorem 4.1.
The Milstein scheme improves accuracy beyond its computational cost because it uses diffusion derivative terms. The present energy system can be simulated for extended periods through computationally efficient and accurate performance of Euler--Maruyama method.\\
The Milstein scheme exhibits slightly improved accuracy for comparable computational cost due to its inclusion of diffusion derivative terms. However, Euler--Maruyama remains computationally efficient and sufficiently accurate for long-term simulations of the present energy system.

\begin{table}[h] 
\centering
\caption{Strong error comparison between Euler--Maruyama and Milstein schemes.}
\begin{tabular}{c c c}
\hline
$\Delta t$ & EM Strong Error & Milstein Strong Error \\
\hline
0.02  & $2.41 \times 10^{-2}$ & $1.15 \times 10^{-2}$ \\
0.01  & $1.68 \times 10^{-2}$ & $5.90 \times 10^{-3}$ \\
0.005 & $1.12 \times 10^{-2}$ & $3.12 \times 10^{-3}$ \\
\hline
\end{tabular}
\end{table}

Table 2 summarizes the strong error estimates using both the Euler--Maruyama and Milstein schemes. The simulation results depict that the Milstein scheme will produce stronger errors at the same time step while Euler--Maruyama maintains satisfactory accuracy with lower computational complexity.

\subsection{Sensitivity Analysis}

To assess the robustness of the stochastic energy supply--demand system, a local sensitivity analysis is conducted around the baseline parameter set given in Table~1.

\paragraph{Local parameter sensitivity.}
Each parameter $p$ is perturbed by $\pm 10\%$ while keeping all others fixed. For a quantity of interest $Q(p)$ (for example, the time-averaged demand or peak import level), the normalized sensitivity index is computed as
\[
S_p
=
\frac{p}{Q(p)} \frac{\partial Q}{\partial p}
\approx
\frac{Q(p+\delta p)-Q(p-\delta p)}{2\delta p} \cdot \frac{p}{Q(p)}.
\]

Quantities of interest include:
\begin{itemize}
\item Long-term average demand:
\[
\bar{X}_1 = \frac{1}{T}\int_0^T X_1(t)\, dt,
\]
\item Long-term average renewable level $\bar{X}_4$,
\item Maximum import amplitude $\max_{t\in[0,T]} X_3(t)$.
\end{itemize}

\begin{figure}[H]
	\begin{center}
		\includegraphics[scale=0.8]{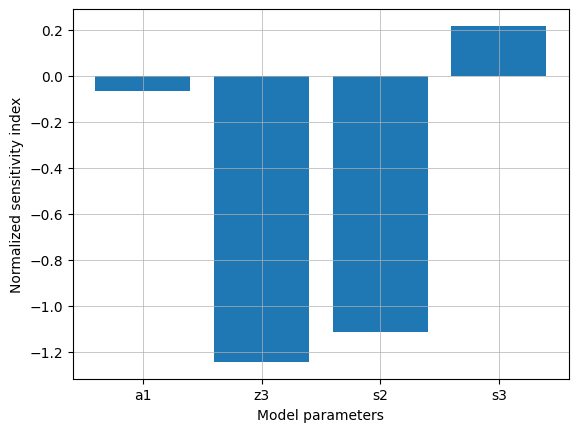}
		\caption{Sensitivity of long-term average demand.}
		\label{fig:placeholder-6}
	\end{center}
\end{figure}

Figure~6 illustrates the normalized sensitivity indices of selected parameters.
The results show that demand growth parameters ($a_1$) and supply responsiveness parameters $z_3$) represent the strongest factors which determine the system's long-term performance. The import-related parameters  $s_2$ and $s_3$ specifically influence the short-term increase of the import variable $X_3(t)$.

\paragraph{Noise sensitivity.}
To examine the impact of uncertainty, the noise intensities $\sigma_i$ are varied within the admissible ranges shown in Table~1. Increasing $\sigma_3$ produces wider fluctuation bands for imported energy, while larger $\sigma_4$ enhances renewable variability. Moderate noise intensities preserve boundedness and stability, consistent with the theoretical matrix inequality conditions.

Overall, the sensitivity analysis confirms that system stability depends on the balance between intrinsic growth parameters and stochastic perturbation strength. Deterministic equilibrium predictions remain qualitatively valid under moderate noise but may underestimate transient amplification and volatility effects.

\subsection{Model Long Term Implications and Possible Extensions }
The results of persistence in Section 5 and some numerical evidence show that the system is able to sustain strictly positive levels of demand and renewable resources in the long term average under a moderate intensity of noise. At the same time, the stability and persistence of the stochastic system depend on the balance between the intrinsic growth mechanisms and the strength of random perturbation. Therefore, when the rates of dispersive transport become too big compared to the stabilizing forces, the matrix inequalities may go unbounded, the conditions may break down, and the stability of the system cannot be guaranteed. Therefore, an emphasis should be placed on considering different levels of uncertainty into planning treatments, rather than merely following deterministic equilibrium predictions.

The Euler–Maruyama approach successfully reproduces the analytical results. The resulting trajectories are continuous, effectively showing out of the chaotic equilibrium, and exhibit stability with the inequalities developed. So, from this kind of numerical analysis, there comes proof to know that stochastic perturbations rewrite energy-system dynamics, bringing about realistic long-term persistence in the competitive spectrum, quite some stability in thought-to-be stabilized behavior with long-term projections against predictions of determinism.

The aforementioned framework enables any and all possible extensions that can be used in further enhancing the realism and practicability of the model. Delays deserve to be incorporated, representing investment lags and the development of infrastructure, while little jump disturbances, contingent jumps driven by Levy processes, may be allowed, aiming at abrupt and significant shocks: policy changes, disruptive supply chains, or specific hazards. Stochastic optimal control may find its application in different settings like those involving renewable subsidies, import regulation, or demand-side management policies. Further, parameter estimation and calibration using real market data and data on renewable production can be carried out in order to do the quantification of validity and, possibly via model improvements, to allow an enhancement of prognostic relevancy.\\

\section{Conclusion}

A stochastic energy supply and demand model was presented in this paper, extending to renewable integration and details. Given the case of a nonlinear deterministic framework, the model was extended to an Itô stochastic differential system, giving a fluency of inclusion of uncertainties regarding the market, environmental variability, emission reduction, technological change, and policy changes simultaneously. The stochastic formulation delivers a more plausible representation of an energy system that operates under uncertain economic and environmental conditions.

Besides, the solution has been thoroughly analyzed using mathematical methods to establish the global existence and uniqueness of positive components. This does not stop the model in any time from being well defined and physically meaningful. However, depending on the specified range of parameters, there are several possible levels for the rate of almost sure moment boundedness and Lyapunov asymptotic stability, leading to selective identification of the ones that are satisfactory. The utility and relevance of the results lie in the understanding that multiplicative stochastic perturbations may lead to destabilization of systems not necessarily. In certain cases, stochastic effects contribute to the stabilization and modify the effective structure of equilibrium, and the analysis of stochastic persistence also suggests that energy and renewable resources must stay positive in long-term average, hence fostering sustainability vis-a-vis moderate noise intensities.

As numerical simulations further support the theoretical findings, the numerical experiments, which included both convergence testing and sensitivity analysis, confirmed the accuracy of the numerical approximation while revealing that demand growth and supply responsiveness served as major factors that determined the system's long-term performance. In the trajectories, volatility is realistic; amplification is transient; and they remain bounded and positive. It is numerically shown that in the presence of stochastic perturbations, the transient dynamics are modified toward long-term averages compared to deterministic predictions, and the system stability criterion may hold under certain conditions.

In conclusion, the above results prove that stochastic variations are crucial in shaping the long term dynamics of energy systems. Thereby, incorporating uncertainty into supply–demand models is essential for achieving realistic system representation and effective energy planning.

Stochastic model validation, renewable funding and policy networks, and the control of these types of systems are the principal areas for future investigation. Enhancement of the model regarding delayed systems, jump systems, or multisite regional networked energy systems appear promising targets too.\\

\section*{Acknowledgment}
The authors express sincere thanks and appreciation to their respective institutions for the provision of good working environment and all forms of support. In addition, they thank the reviewers for their valuable feedback and constructive criticism which played a vital role in the completion of this research.

\section*{Funding} The work of S. Noeiaghdam was funded by the High-Level Talent Research Start-up Project Funding of Henan Academy of Sciences (Project No. 241819246).

\section*{Declaration of competing interest}
The authors declare no known competing financial or conflict of interests regarding this work.

\noindent \textbf{Author Contributions:} S.O.E., S.N., L.H., S.k.  and I.D.E wrote the main manuscript text, edited, software and coding and prepared the results. All authors reviewed the manuscript.

\section*{Data availability}
All the data used or generated during the current study is embedded in this published article.

\section*{Declaration of Generative AI}
The author conducted content analysis through Grammarly software during the work. The authors used the tool, which they reviewed and edited before submitting the final version of this article.

\section*{Ethics}
Not applicable.

\end{document}